\newtheorem{thm}{Theorem}
\newtheorem{ques}{Question}
\newtheorem{lem}[thm]{Lemma}
\newtheorem{claim}{Claim}
\theoremstyle{definition}
\begin{document}

	\title{\bf Spectral condition for the existence of a chorded cycle}
	\author{{Jiaxin Zheng,  Xueyi Huang\footnote{Corresponding author.}\setcounter{footnote}{-1}\footnote{\emph{E-mail address:} huangxymath@163.com}},  Junjie Wang\\[2mm]
	\small School of Mathematics, East China University of Science and Technology,\\
	\small Shanghai 200237, China}

	\date{}
	\maketitle
	{\flushleft\large\bf Abstract }  A chord
	of a cycle $C$ is an edge joining two non-consecutive vertices of $C$. A cycle $C$ in a graph $G$ is chorded if the vertex set of $C$ induces at least one chord. In this paper, we prove that if  $G$ is a graph with order $n\geq 6$ and $\rho(G)\geq \rho(K_{2,n-2})$, then $G$ contains a chorded cycle unless $G\cong K_{2,n-2}$. This gives one answer to a question posed by  Gould [Results and problems on chorded cycles: A survey,  Graphs Combin. 38 (2022)  189].
	
	\begin{flushleft}
		\textbf{Keywords:} Spectral radius;  chorded cycle; Gould's question.
	\end{flushleft}

	\section{Introduction}
	All graphs considered in this paper are simple and undirected. Let $G$ be a graph with vertex set $V(G)$ and  edge set $E(G)$. For a graph $G$, we refer to  the number of vertices of $G$ as the \textit{order} of $G$ and denote it by $|G|$. Let $e(G):=|E(G)|$ denote the number of edges in $G$.  For any $v\in V(G)$, let $d_G(v)$ denote the degree of $v$ in $G$, and $N_G(v)$ denote the set of vertices adjacent to $v$ in $G$.  For any vertex subset $S\subseteq V(G)$, we denote by $G[S]$ the subgraph of $G$ induced by $S$, and $e(S):=e(G[S])$. For any two disjoint subsets $S$ and $T$ of $V(G)$, we denote by $E(S,T):=E_G(S,T)$ the set of edges of $G$ between $S$ and $T$, and let $e(S,T):=|E(S,T)|$. The \textit{join} of two graphs $G_1$ and $G_2$, denoted by $G_1\nabla G_2$,  is the graph obtained from the vertex-disjoint union $G_1\cup G_2$ by adding all possible edges between $G_1$ and $G_2$. 
	
	The \textit{adjacency matrix} of $G$ is defined as $A(G)=(a_{u,v})_{u,v\in V(G)}$, where $a_{u,v}=1$ if $u$ and $v$ are adjacent in $G$, and $a_{u,v}=0$ otherwise.  The largest eigenvalue of  $A(G)$  is called the \textit{spectral radius} of $G$, and denoted by $\rho(G)$. In recent years, the problem of finding spectral conditions for graphs having certain structural properties or containing specified kinds of subgraphs has received considerable attention. Cioab\u{a}, Gregory and Haemers \cite{CGH} found a best upper bound on the third largest eigenvalue that is sufficient to guarantee that an $n$-vertex $k$-regular graph $G$ has a perfect matching when $n$ is even, and a matching of order $n-1$ when $n$ is odd. Fiedler and Nikiforov \cite{FN} gave a spectral radius condition for graphs to have a Hamilton cycle or Hamilton path. Li and Ning \cite{LN} provided a tight spectral radius condition for graphs with bounded minimum degree to have a Hamilton cycle or Hamilton path. Cioab\u{a}, Feng, Tait and Zhang \cite{CFTZ} provided a tight spectral radius for graphs to contain a friendship graph of given order as a subgraph. For more results on this topic, we refer the reader to  \cite{BZ,O16,O22,OC,OPPZ,WKX,ZL}, and references therein.

	 A \textit{chord} of a cycle $C$ is an edge joining two non-consecutive vertices of $C$. A cycle $C$ in a graph $G$ is \textit{chorded} if the vertex set of $C$ induces at least one chord. In 1961, P\'{o}sa \cite{P}  asked a very natural question:

\begin{ques}\label{ques0}	 
What conditions imply a graph contains a chorded cycle?
\end{ques} 

In \cite{P}, P\'{o}sa  provided one answer by showing that every graph of order $n$ with at least $2n-3$ edges contains a chorded cycle. From then on, P\'{o}sa's question had aroused a lot of interest. In 2022,  Gould \cite{G} surveyed results and problems that relate to Posa's question on chorded cycles in graphs, which include  sufficient conditions for the existence of a
chorded cycle, or sets of chorded cycles, or cycles with multiple chords, or chorded cycles with additional properties. At the end of the survey paper \cite{G}, Gould asked the following quesiton.
	 
	\begin{ques}\label{ques1}
	    What spectral conditions imply the existence of a chorded cycle in a graph?
	\end{ques}
    
In this paper, we provide one answer for Gould's question by using the spectral radius of graphs.

	\begin{thm}\label{thm::1}
		Let $G$ be a graph of order $n \geq 6$. If $\rho(G) \geq \rho(K_{2,n-2})$, then $G$ contains a chorded cycle unless $G\cong K_{2,n-2}$.
	\end{thm}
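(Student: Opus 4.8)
The plan is to establish a sharper, connected version and deduce the theorem from it. Note first that $\rho(K_{2,n-2})=\sqrt{2(n-2)}$ since $\rho(K_{a,b})=\sqrt{ab}$, and that $K_{2,n-2}$ is itself chorded-cycle-free (every cycle in it is a chordless $C_4$). I would argue by contradiction: assume $G$ has no chorded cycle and $\rho(G)\ge\sqrt{2(n-2)}$, and prove $G\cong K_{2,n-2}$. The key intermediate claim is that every connected chorded-cycle-free graph $H$ of order $m\ge 6$ satisfies $\rho(H)\le\sqrt{2(m-2)}$, with equality only for $H\cong K_{2,m-2}$. Granting this, $\rho(G)$ is attained on some connected component $G'$, which is again chorded-cycle-free; if $G$ were disconnected then $|G'|<n$, and using the claim for the order $|G'|$ (plus a direct check of the finitely many orders below $6$) one gets $\rho(G)=\rho(G')<\sqrt{2(n-2)}$, a contradiction. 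Hence $G$ is connected and the claim applies, so $\rho(G)\ge\sqrt{2(n-2)}$ forces equality and therefore $G\cong K_{2,n-2}$.

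For the connected claim, recall that by P\'osa's theorem a chorded-cycle-free graph has at most $2n-4$ edges, so $e(H)\le 2m-4$. Let $\mathbf{x}$ be the Perron eigenvector of $H$, normalized so that $x_u=\max_v x_v=1$ at a vertex $u$, and set $A=N_H(u)$ and $B=V(H)\setminus(\{u\}\cup A)$. Counting walks of length two from $u$ yields the identity
\[
\rho(H)^2 \;=\; d_H(u)\;+\;\sum_{w\neq u}\bigl|N_H(u)\cap N_H(w)\bigr|\,x_w ,
\]
and the whole problem is to bound the right-hand side by $2(m-2)$.

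The structural input comes from forbidding chorded cycles. A chorded $C_4$ is exactly the diamond $K_4-e$, so $H$ is diamond-free; consequently any vertex of $A$ adjacent to two others in $A$ would produce two triangles sharing an edge through $u$, i.e.\ a diamond, so $H[A]$ is a matching and $\bigl|N_H(u)\cap N_H(w)\bigr|\le 1$ for $w\in A$. For $w\in B$ the common neighbors of $u$ and $w$ lie in $A$; moreover no $w\in B$ can be adjacent to both endpoints of a matching edge $v_1v_2$ of $A$, since then $v_1v_2$ would chord the $4$-cycle $u\,v_1\,w\,v_2$; and $H$ contains no $K_{3,3}$, because a $6$-cycle in $K_{3,3}$ always has a chord. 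These facts, together with $e(H)\le 2m-4$, are the tools I would use to control $\sum_{w}\bigl|N_H(u)\cap N_H(w)\bigr|\,x_w$ and to run the equality discussion.

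The main obstacle is the second-neighborhood sum $\sum_{w\in B}\bigl|N_H(u)\cap N_H(w)\bigr|\,x_w$. The crude bound $x_w\le 1$ is insufficient: it only gives $\rho(H)^2\le d_H(u)+2e(A)+e(A,B)=\sum_{v\sim u}d_H(v)$, and one can exhibit edge-maximal chorded-cycle-free graphs (for instance on $7$ vertices, with $u$ joined to a matched pair of pairs each sending two edges into $B$) for which this quantity exceeds $2m-4$ while the true spectral radius stays strictly below $\sqrt{2(m-2)}$. The difficulty is intrinsic, because $B$ may contain a vertex whose eigenvector entry also equals the maximum---precisely what occurs for the two branch vertices of $K_{2,m-2}$---so one cannot simply declare the entries on $B$ small. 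The estimate must therefore retain the Perron weights on $B$ and exploit that, apart from at most one ``twin'' of $u$, the entries $x_w$ with $w\in B$ are genuinely smaller; concretely I would reinject each such entry through its own equation $\rho\,x_w=\sum_{y\sim w}x_y$, or compare $H$ directly with $K_{2,m-2}$. Once $\rho(H)^2\le 2(m-2)$ is secured, the equality case is handled by tracking which inequalities are tight: this forces $A$ to be independent, $B$ to reduce to a single vertex adjacent to all of $A$, and $d_H(u)=m-2$, which is exactly $K_{2,m-2}$.
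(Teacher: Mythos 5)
Your setup is sound and matches the paper's starting point: the eigenvalue identity at a maximum-entry vertex $u$, the partition into $A=N(u)$ and $B$, the fact that $H[A]$ is a matching (diamond-freeness), the observation that no vertex of $B$ sees both ends of a matching edge in $A$, and P\'osa's bound $e(H)\le 2m-4$ are all correct and all appear in the paper (its Claim 3 is exactly your neighborhood-disjointness facts). However, there is a genuine gap: your ``key intermediate claim'' --- that every connected chorded-cycle-free graph $H$ of order $m\ge 6$ has $\rho(H)\le\sqrt{2(m-2)}$ with equality only for $K_{2,m-2}$ --- is just a restatement of the theorem for connected graphs, and you never prove it. You correctly diagnose that the crude estimate $x_w\le x_u$ only yields $\rho^2\le|A|+2e(A)+e(A,B)$, which can exceed $2m-4$ for chorded-cycle-free graphs, and you correctly note that entries on $B$ cannot be assumed small (the twin vertex in $K_{2,n-2}$). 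But the proposed remedies --- ``reinject each such entry through its own equation'' or ``compare $H$ directly with $K_{2,m-2}$'' --- are left entirely unexecuted, and it is precisely this step that constitutes the whole difficulty. The equality analysis at the end is likewise asserted, not derived; it depends on knowing which inequalities are tight in an argument that was never completed.

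It is worth seeing how the paper gets around the very obstacle you identified, because its resolution is structurally different from what you sketch. The paper does not try to prove the universal bound head-on for an arbitrary chorded-cycle-free $H$. Instead it takes $G$ to be a chorded-cycle-free graph of order $n$ \emph{maximizing} $\rho$, which makes two additional tools available: (i) rewiring arguments (its Lemma 1 and Claim 1) that exploit extremality to show, e.g., that every $w\in B$ satisfies $N(w)=N(u^*)$ --- an argument you cannot run, since your $H$ is arbitrary; and (ii) the crude inequality is used not as a would-be contradiction but as a \emph{counting constraint} $|A|+2e(A)+e(A,B)\ge 2n-4$, which, combined with the disjointness claims and $1+|A|+|N'(T_0)|+|N'(T_1)|\le n$, forces $N'(T_1)=\emptyset$ and collapses the structure of $G$ to a short explicit list: $K_{1,n-1}$, $K_{2,n-2}$, subgraphs of the friendship graph $F_{(n-1)/2}$ or of $F_{(n-2)/2}$ with a pendant edge, and the glued graphs $K_{2,a}\bullet F_k$. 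Each of these candidates is then ruled out (or identified as $K_{2,n-2}$) by computing its spectral radius directly via equitable quotient matrices (Lemmas 3, 5, 6), showing it falls strictly below $\sqrt{2n-4}$. In short: where your plan stalls at a hard analytic estimate, the paper substitutes extremality-based structural reduction plus finitely many explicit spectral computations. To complete your proposal you would need either to carry out that reduction (effectively reproducing the paper's Case 2) or to find the sharper eigenvector-weighted estimate you allude to, which is not a routine matter.
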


	\section{Preliminaries}
	
	In this section, we introduce some notions and lemmas, which are useful in the proof of Theorem \ref{thm::1}. The first two results are well-known, and one can find them in \cite{XLLS} and \cite{B}, respectively.

	\begin{lem}(\cite{XLLS})\label{lem::1}
		Let $G$ be a connected graph. For $u,v\in V(G)$, suppose $N\subseteq N(v)\backslash (N(u)\cup \{u\})$. Let $G'=G-\{vw: w\in N\}+\{uw: w\in N\}$. If $N\neq \emptyset$ and $\boldsymbol{x}=(x_v)_{v\in V(G)}$ is the Perron vector of $G$ such that $x_{u}\geq x_{v}$, then $\rho(G')>\rho(G)$.
	\end{lem}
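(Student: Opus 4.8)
The plan is to use the Rayleigh quotient characterization of the spectral radius together with the strict positivity of the Perron vector of a connected graph. Write $A=A(G)$, $A'=A(G')$, $\rho=\rho(G)$, and take the Perron vector $\mathbf{x}$ normalized so that $A\mathbf{x}=\rho\mathbf{x}$. First I would record that $G'$ is genuinely a simple graph: since $N\subseteq N(v)\setminus(N(u)\cup\{u\})$, every $w\in N$ satisfies $w\neq u$ and $uw\notin E(G)$, so inserting the edges $\{uw:w\in N\}$ creates no loop and no multi-edge, while the deleted edges $\{vw:w\in N\}$ really lie in $E(G)$ because $N\subseteq N(v)$. Since $G$ is connected, Perron--Frobenius gives $\mathbf{x}>0$, a fact I will use repeatedly.

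The core computation is the change in the quadratic form. Using $\mathbf{x}^\top A\mathbf{x}=2\sum_{ij\in E(G)}x_ix_j$ and the fact that $G'$ is obtained from $G$ by deleting each $vw$ and inserting each $uw$ for $w\in N$, I get
\[
\mathbf{x}^\top A'\mathbf{x}-\mathbf{x}^\top A\mathbf{x}
=2\sum_{w\in N}\bigl(x_u-x_v\bigr)x_w
=2(x_u-x_v)\sum_{w\in N}x_w .
\]
Because $\mathbf{x}>0$ and $N\neq\emptyset$ we have $\sum_{w\in N}x_w>0$, so together with $x_u\geq x_v$ this difference is nonnegative. Hence by the Rayleigh principle applied to the symmetric nonnegative matrix $A'$,
\[
\rho(G')=\max_{\mathbf{y}\neq 0}\frac{\mathbf{y}^\top A'\mathbf{y}}{\mathbf{y}^\top\mathbf{y}}
\geq\frac{\mathbf{x}^\top A'\mathbf{x}}{\mathbf{x}^\top\mathbf{x}}
\geq\frac{\mathbf{x}^\top A\mathbf{x}}{\mathbf{x}^\top\mathbf{x}}=\rho ,
\]
which already yields the weak bound $\rho(G')\geq\rho$.

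The only nonroutine part is upgrading this to strict inequality, and I would split into two cases. If $x_u>x_v$, the displayed difference is strictly positive, so $\rho(G')\geq\mathbf{x}^\top A'\mathbf{x}/\mathbf{x}^\top\mathbf{x}>\rho$ and we are done. If $x_u=x_v$, the Rayleigh bound degenerates to $\rho(G')\geq\rho$, so I argue by contradiction: suppose $\rho(G')=\rho$. Then $\mathbf{x}$ attains the maximum of the Rayleigh quotient of $A'$, and for a real symmetric matrix every maximizer lies in the eigenspace of the largest eigenvalue, so $A'\mathbf{x}=\rho(G')\mathbf{x}=\rho\mathbf{x}$. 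Reading off the $u$-th coordinate, however, $(A'\mathbf{x})_u=(A\mathbf{x})_u+\sum_{w\in N}x_w=\rho x_u+\sum_{w\in N}x_w>\rho x_u$, since $\mathbf{x}>0$ and $N\neq\emptyset$, contradicting $A'\mathbf{x}=\rho\mathbf{x}$. Thus $\rho(G')>\rho$ in this case as well. The subtlety to keep in mind is exactly the step that a Rayleigh-quotient maximizer of $A'$ must be a $\lambda_{\max}$-eigenvector of $A'$; this holds for symmetric matrices regardless of whether $G'$ is connected, so connectivity of $G'$ is never needed and only the positivity of the Perron vector of $G$ is invoked.
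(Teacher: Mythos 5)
Your proof is correct. The paper itself gives no proof of this lemma (it is quoted from Xue, Lin, Liu and Shu \cite{XLLS}), and your argument is essentially the standard one from that source: the Rayleigh-quotient comparison $\mathbf{x}^{T}A(G')\mathbf{x}\geq\mathbf{x}^{T}A(G)\mathbf{x}$ giving $\rho(G')\geq\rho(G)$, with strictness in the case $x_u=x_v$ obtained by forcing $\mathbf{x}$ to be a top eigenvector of $A(G')$ and reading off a contradiction at the coordinate $u$, and your explicit remark that this maximizer-is-eigenvector step requires only symmetry of $A(G')$, not connectivity of $G'$, is precisely the point where sloppier versions of the argument (which invoke a ``Perron vector of $G'$'') go astray.
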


	\begin{lem}(\cite{B})\label{lem::2}
		Let $G$ be a connected graph, and let $H$ be a  proper subgraph of $G$. Then $\rho(H)<\rho(G)$.
	\end{lem}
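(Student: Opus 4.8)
The plan is to prove the equivalent extremal statement: every chorded-cycle-free graph $G$ of order $n\ge 6$ satisfies $\rho(G)\le\rho(K_{2,n-2})=\sqrt{2n-4}$, with equality if and only if $G\cong K_{2,n-2}$; Theorem~\ref{thm::1} is then immediate. I would first reduce to connected graphs. If $G$ is disconnected, the component $G_1$ attaining $\rho(G_1)=\rho(G)$ is a connected chorded-cycle-free graph on $n_1\le n-1$ vertices; since (once the connected bound is in hand for all orders) any such component satisfies $\rho(G_1)\le\sqrt{2n_1-4}$ when $n_1\ge 6$, $\rho(G_1)\le 2$ when $G_1$ is a cycle, and $\rho(G_1)\le\sqrt{6}$ when $n_1\le 5$, in every case $\rho(G)<\sqrt{2n-4}$ for $n\ge 6$. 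Thus it suffices to show that a connected chorded-cycle-free graph of maximum spectral radius is $K_{2,n-2}$.

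Next I would record the local structure forced by the absence of chorded cycles, which I expect to be the backbone of the argument. The key facts are: (i) if $uv\in E(G)$ then $u,v$ have at most one common neighbour, since two common neighbours would give a $4$-cycle through $u,v$ chorded by $uv$; hence every edge lies in at most one triangle; and (ii) if $u,v$ are non-adjacent then $N(u)\cap N(v)$ is independent, since an edge inside it would chord the associated $4$-cycle. Using these, I would prove by an ear decomposition starting from an induced cycle the classification that every $2$-connected chorded-cycle-free graph with at least three vertices is either a cycle $C_k$ or a \emph{generalized theta graph}: two branch vertices joined by $t\ge 3$ internally disjoint paths each of length at least $2$ (a subdivision of $K_{2,t}$). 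The heart of this step is checking that any length-one ear, or any ear whose two endpoints are not both branch vertices, creates a chord; constraints (i)--(ii) make these case checks routine.

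With the classification in place I would locate the extremum. For a $2$-connected $G$ with $n\ge 6$: a cycle has $\rho=2<\sqrt{2n-4}$, while for a generalized theta graph I would use the Perron vector and the Kelmans-type shift of Lemma~\ref{lem::1} to transfer the internal vertices of any path of length $\ge 3$ onto a branch vertex, shortening paths until all have length $2$. Each such shift keeps the graph a generalized theta graph (hence chorded-cycle-free) and strictly raises $\rho$, so the maximum is attained only when $t=n-2$, i.e.\ by $K_{2,n-2}$; Lemma~\ref{lem::2} supplies the accompanying subgraph comparisons. It then remains to exclude cut vertices: for a connected chorded-cycle-free $G$ of maximum $\rho$ with Perron vector $\mathbf{x}$, I would apply Lemma~\ref{lem::1} at a cut vertex to move an end-block's attachment toward the vertex of larger Perron weight, verifying that the merge introduces no chord, which strictly increases $\rho$ unless $G$ is already $2$-connected. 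Combining the three steps gives $\rho(G)\le\sqrt{2n-4}$ with equality only for $K_{2,n-2}$.

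The main obstacle, I expect, is precisely this cut-vertex reduction, and more broadly guaranteeing that every spectral-increasing shift preserves chorded-cycle-freeness. The delicate configurations are the triangle-rich ones: a chorded-cycle-free graph may contain arbitrarily many triangles (for instance a bouquet of triangles sharing a single vertex), and such graphs are not subgraphs of the triangle-free $K_{2,n-2}$, so Lemma~\ref{lem::2} cannot be invoked against them directly. To handle these I would fall back on the identity $\rho^2=d(u)+\sum_{w\ne u}|N(u)\cap N(w)|\,x_w$ at a maximum-weight vertex $u$ (normalised so $x_u=1$): fact~(i) bounds each codegree $|N(u)\cap N(w)|$ with $w$ adjacent to $u$ by $1$, and rather than using the crude estimate $x_w\le 1$ on the triangle vertices I would use their own eigenvalue equations to show their cumulative contribution is small, pushing the estimate to $\rho^2\le 2n-4$ and forcing the $K_{2,n-2}$ structure in the equality case.
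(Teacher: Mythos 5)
Your proposal does not address the statement you were asked to prove. The statement is Lemma~\ref{lem::2}, the classical monotonicity fact that a (proper) subgraph $H$ of a connected graph $G$ satisfies $\rho(H)\leq\rho(G)$; what you have written is instead a proof plan for Theorem~\ref{thm::1}, the paper's main extremal result on chorded cycles. Worse, your outline explicitly invokes Lemma~\ref{lem::2} as a tool (``Lemma~\ref{lem::2} supplies the accompanying subgraph comparisons''), so read as a proof of that lemma it is circular. The paper itself offers no proof here --- the lemma is quoted from Bapat's book --- so the benchmark is the standard two-line argument, which your proposal never touches.

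For the record, that argument runs as follows. We may assume $H$ is spanning by adding isolated vertices, which changes neither $\rho(H)$ nor the entrywise inequality $A(H)\leq A(G)$. Let $\mathbf{y}\geq\mathbf{0}$ be a unit eigenvector of $A(H)$ for $\rho(H)$ (it exists by Perron--Frobenius applied componentwise). Then
\begin{equation}
\rho(H)=\mathbf{y}^{T}A(H)\mathbf{y}\leq\mathbf{y}^{T}A(G)\mathbf{y}\leq\max_{\|\mathbf{z}\|=1}\mathbf{z}^{T}A(G)\mathbf{z}=\rho(G),
\nonumber
\end{equation}
where the first inequality uses $\mathbf{y}\geq\mathbf{0}$ and $A(H)\leq A(G)$, and the second is the Rayleigh characterization. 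Moreover, when $G$ is connected and $H$ is proper, the inequality is strict: if $\rho(H)=\rho(G)$, then $\mathbf{y}$ is a unit vector attaining the Rayleigh maximum for $A(G)$, hence an eigenvector of $A(G)$; by Perron--Frobenius the Perron vector of the connected $G$ is positive and simple, so $\mathbf{y}>\mathbf{0}$, and then $\mathbf{y}^{T}A(H)\mathbf{y}<\mathbf{y}^{T}A(G)\mathbf{y}$ for any missing edge (or, if only vertices are missing, $\mathbf{y}$ has zero entries, a contradiction). This strict version is in fact what the paper uses, e.g.\ in the proof of Claim~\ref{claim::1}, where $N(w)\subsetneqq A$ yields $\rho(G_{1})>\rho(G)$. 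None of the machinery in your proposal (ear decompositions, generalized theta graphs, the Kelmans-type shift of Lemma~\ref{lem::1}) is relevant to this lemma.
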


		\begin{lem}(\cite{ZLG15})\label{lem::7}
		Let $G =(X,Y)$ be a bipartite graph, where $|X|\geq r$ and $|Y|\geq r-1\geq1$.
        If $G$ does not contain a copy of $P_{2r+1}$ with both endpoints in $X$, then
        $$e(G)\leq (r-1)|X| + r|Y |-r(r-1).$$
         Equality holds if and only if $G\cong K_{|X|,|Y|}$, where $|X| =r$ or $|Y| =r-1$.
	    \end{lem}

    The \textit{friendship graph} $F_k$ ($k\geq 1$) is the graph of order $2k+1$ consisting of $k$ edge-disjoint triangles that meet in a single vertex.

    \begin{lem}(\cite{V})\label{lem::3}
		The eigenvalues of $F_k$ are 
		$(1\pm \sqrt{1+8k})/2$ and  $\pm 1$ (with multiplicity $k-1$). 
	\end{lem}

        Let $M$ be a real $n$ $\times$ $n$ matrix, and let $\Pi=\{X_{1},X_{2}, \ldots,X_{k}\}$ be a partition of $[n]=\{1,2,\ldots,n\}$. Then the matrix $M$ can be correspondingly partitioned as
        $$
	M=\left(\begin{array}{ccccccc}
		M_{1,1}&M_{1,2}&\cdots&M_{1,k}\\
            M_{2,1}&M_{2,2}&\cdots&M_{2,k}\\
            \vdots&\vdots&\ddots&\vdots\\
            M_{k,1}&M_{k,2}&\cdots&M_{k,k}\\
	\end{array}\right).
	$$
 The \textit{quotient matrix} of $M$ with respect to $\Pi$ is  the matrix $B_{\Pi}=(b_{i,j})^{k}_{i,j=1}$ with
        $$
            b_{i,j}=\frac{1}{|X_{i}|}\boldsymbol{j}^{T}_{|X_{i}|}M_{i,j}\boldsymbol{j}_{|X_{j}|}
        $$
for all $i,j \in \{ 1,2,\ldots,k \}$, where $\boldsymbol{j}_{s}$ denotes the all ones vector in $\mathbb{R}^{s}$. If each block $M_{i,j}$ of $M$ has constant row sum $b_{i,j}$, then $\Pi$ is called an \textit{equitable partition}, and the quotient matrix $B_\Pi$ is called an  \textit{equitable quotient matrix} of $M$. Also, if the eigenvalues of $M$ are real, we denote them by $\lambda_1(M)\geq \lambda_2(M)\geq \cdots \geq \lambda_n(M)$. 

        \begin{lem}(Brouwer and Haemers \cite[p. 30]{BH}; Godsil and Royle \cite[pp.196--198]{GR})\label{lem::4}
            Let $M$ be a real symmetric matrix, and let $B$ be an equitable quotient matrix of $M$. Then the eigenvalues of $B$ are also eigenvalues of $M$. Furthermore, if $M$ is nonnegative and irreducible, then
            $$ \lambda_1(M)=\lambda_1(B).
            $$
	\end{lem}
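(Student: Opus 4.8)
The plan is to realize $B$ as the compression of $M$ to the subspace $\mathcal{U}$ of vectors that are constant on each cell of the partition, and then to read off both assertions from the elementary theory of invariant subspaces together with the Perron--Frobenius theorem. First I would introduce the characteristic (indicator) matrix $S\in\mathbb{R}^{n\times k}$ of $\Pi$, defined by $S_{v,i}=1$ if $v\in X_i$ and $S_{v,i}=0$ otherwise; its columns are the indicator vectors of the cells, so they are linearly independent and $S^{T}S=D:=\mathrm{diag}(|X_1|,\dots,|X_k|)$, a positive diagonal (hence invertible) matrix. The equitable hypothesis, namely that each block $M_{i,j}$ has constant row sum $b_{i,j}$, is exactly the matrix identity $MS=SB$, which I would verify entrywise: for $v\in X_i$ one has $(MS)_{v,j}=\sum_{w\in X_j}M_{v,w}=b_{i,j}=(SB)_{v,j}$. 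A short computation then gives $B=D^{-1}S^{T}MS$, so $B$ is the quotient of $M$ by $\Pi$ in the usual sense.

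For the first assertion, suppose $Bu=\lambda u$ with $u\neq 0$. Applying $MS=SB$ yields $M(Su)=(MS)u=S(Bu)=\lambda (Su)$, and $Su\neq 0$ because $S$ has full column rank; hence $\lambda$ is an eigenvalue of $M$ with eigenvector $Su$. In particular every eigenvalue of $B$ is an eigenvalue of the symmetric matrix $M$, so the eigenvalues of $B$ are in fact all real. Equivalently, setting $\tilde S:=SD^{-1/2}$ one checks $\tilde S^{T}\tilde S=I$ and $D^{1/2}BD^{-1/2}=\tilde S^{T}M\tilde S$, so $B$ is similar to the symmetric matrix $\tilde S^{T}M\tilde S$, which is the matrix of the restriction of $M$ to the $M$-invariant subspace $\mathcal{U}:=\mathrm{col}(S)$ expressed in an orthonormal basis; this makes the eigenvalue transfer transparent.

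For the second assertion I would invoke Perron--Frobenius: since $M$ is nonnegative and irreducible, $\lambda_1(M)$ is a simple eigenvalue admitting a strictly positive eigenvector $\mathbf{x}>0$. Because $\mathcal{U}$ is $M$-invariant and $M$ is symmetric, $\mathcal{U}^{\perp}$ is $M$-invariant as well, so the one-dimensional eigenspace $\ker(M-\lambda_1(M)I)$ lies entirely in $\mathcal{U}$ or entirely in $\mathcal{U}^{\perp}$. The crucial point, and the only step that genuinely uses the positivity hypothesis, is to rule out the second possibility: the all-ones vector $\mathbf{j}$ is the sum of the columns of $S$, hence $\mathbf{j}\in\mathcal{U}$, so any nonzero vector of $\mathcal{U}^{\perp}$ would be orthogonal to $\mathbf{j}$ and could not be everywhere positive, contradicting $\mathbf{x}>0$. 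Thus $\mathbf{x}\in\mathcal{U}$, so $\lambda_1(M)$ is an eigenvalue of the restriction $\tilde S^{T}M\tilde S$, i.e.\ of $B$; combined with $\lambda_1(B)\le\lambda_1(M)$ from the first part, this forces $\lambda_1(M)=\lambda_1(B)$. I expect the main obstacle to be precisely this localization of the Perron vector inside $\mathcal{U}$, while the remaining steps are formal manipulations of $S$ and the identity $MS=SB$.
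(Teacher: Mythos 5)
Your proof is correct, but there is nothing in the paper to compare it against: the paper states this lemma with citations to Brouwer--Haemers and Godsil--Royle and gives no proof of its own. Measured against the standard textbook arguments it cites, your treatment of the first assertion is exactly the classical one: the identity $MS=SB$ encoding equitability, eigenvalue transfer via $Su$ with $S$ of full column rank, and the similarity $D^{1/2}BD^{-1/2}=\tilde S^{T}M\tilde S$ showing $B$ has real spectrum. For the second assertion you deviate mildly from the usual route. The textbook argument goes in the opposite direction: since $B$ is nonnegative, its spectral radius $\lambda_1(B)$ admits a nonnegative eigenvector $u$, which lifts to the nonnegative eigenvector $Su$ of $M$; Perron--Frobenius for the irreducible nonnegative $M$ says that (up to scaling) the only eigenvalue with a nonnegative eigenvector is $\lambda_1(M)$, whence $\lambda_1(B)=\lambda_1(M)$. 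You instead localize the Perron vector $\mathbf{x}$ of $M$ inside $\mathcal{U}=\mathrm{col}(S)$, using simplicity of $\lambda_1(M)$, the $M$-invariance of both $\mathcal{U}$ and $\mathcal{U}^{\perp}$ (the latter needing symmetry of $M$), and $\mathbf{j}\in\mathcal{U}$ to exclude $\mathbf{x}\in\mathcal{U}^{\perp}$. Both routes are valid; note that your dichotomy ``the eigenspace lies entirely in $\mathcal{U}$ or entirely in $\mathcal{U}^{\perp}$'' deserves the one-line justification that writing $\mathbf{x}=u+w$ with $u\in\mathcal{U}$, $w\in\mathcal{U}^{\perp}$ gives $Mu=\lambda_1 u$ and $Mw=\lambda_1 w$ by invariance, so simplicity forces $u=0$ or $w=0$. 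The lifting argument is slightly more economical in hypotheses (it needs neither symmetry nor simplicity, so it extends to non-symmetric irreducible $M$), whereas yours exploits the symmetric structure already assumed in the lemma; for the application in this paper, where $M$ is an adjacency matrix of a connected graph, either suffices.
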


For any vertex $u$ of $G$, let $G^{u}$ denote the graph obtained from $G$ by attaching a pendant vertex at $u$.

    \begin{lem}\label{lem::5}
     Let $n\geq 6$ be an even integer. If $u$ is  the central vertex of  $F_{{\frac{n-2}{2}}}$, then
     $$\rho(F^{u}_{\frac{n-2}{2}})< \sqrt{2n-4}.$$
    \end{lem}
    \begin{proof}
Suppose that $v$ is the pendant vertex attaching at $u$ in  $F^{u}_{\frac{n-2}{2}}$. Let $V_{1}=\{u\}$, $V_2=V(F_{{\frac{n-2}{2}}})\backslash \{u,v\}$ and $V_3=\{v\}$. Then it is easy to see that the partition  $\Pi: V(F^{u}_{\frac{n-2}{2}})=V_1\cup V_2\cup V_3$ is an equitable partition of $F^{u}_{\frac{n-2}{2}}$, and the corresponding quotient matrix is 	
     $$
	  B_\Pi=\left(\begin{array}{ccccccc}
		0&n-2&1\\
		1&1&0\\
		1&0&0\\
 	   \end{array}\right).
	$$
 Let $f(x)$ be the characteristic polynomial of $B_\Pi$. Then 
    \begin{equation*}
			f(\sqrt{2n-4})	=(n-3)\sqrt{2n-4}-(2n-5)=\frac{(n-6)(2n^2-8n+14)+23}{(n-3)\sqrt{2n-4}+2n-5} > 0
	\end{equation*}
	as $n\geq 6$. We claim that $\lambda_1(B_\Pi) <  \sqrt{2n-4}$. As $f(2)=7-2n<0$ and $2<\sqrt{2n-4}$, we have $\lambda_2(B_\Pi) <2$ or $\lambda_3(B_\Pi) > 2$. If $\lambda_2(B_\Pi) <2$, then $\lambda_1(B_\Pi) <  \sqrt{2n-4}$, as desired. If $\lambda_3(B_\Pi) > 2$, then $\lambda_1(B_\Pi)+\lambda_2(B_\Pi)+\lambda_3(B_\Pi)> 6$. On the other hand,  $\lambda_1(B_\Pi)+\lambda_2(B_\Pi)+\lambda_3(B_\Pi)=\mathrm{trace}(B_\Pi)=1$,  a contradiction. Therefore, by Lemma \ref{lem::4},
	$$
	\rho(F^{u}_{\frac{n-2}{2}})=\lambda_1(B_\Pi)< \sqrt{2n-4},
	$$
	and our results follows.
    \end{proof}

    Let $a\geq 2$ and $n\geq \max\{6,a+4\}$ be two integers with the same parity. We define  $K_{2,a}\bullet F_{\frac{n-a-2}{2}}$ (resp. $K_{2,a} * F_{\frac{n-a-2}{2}}$) as the graph of order $n$ obtained by identifying a vertex $u$ of $K_{2,a}$ belonging to the part of size $a$ (resp. $2$) with the central vertex of $F_{\frac{n-a-2}{2}}$.

    \begin{lem}\label{lem::6}
    Let $a\geq 2$ and $n\geq \max\{6,a+4\}$ be two integers with the same parity. Then
     $$\max\{\rho(K_{2,a}\bullet F_{\frac{n-a-2}{2}}),\rho(K_{2,a}* F_{\frac{n-a-2}{2}})\}< \sqrt{2n-4}.$$
    \end{lem}
    
    \begin{proof}
Suppose that $A$ and $B$ are the two parts of $K_{2,a}$ with size $a$ and $2$, respectively. Let $u$ be the central vertex of $F_{\frac{n-a-2}{2}}$. Set $V_{1}=\{u\}$, $V_2=V(K_{2,a}\bullet F_{\frac{n-a-2}{2}})\backslash V(K_{2,a})$, $V_3=B$ and $V_4=A\backslash \{u\}$. Then it is easy to see that the partition  $\Pi: V(K_{2,a}\bullet F_{\frac{n-a-2}{2}})=V_1\cup V_2\cup V_3\cup V_4$ is an equitable partition of $K_{2,a}\bullet F_{\frac{n-a-2}{2}}$, and the corresponding quotient matrix is 	
    $$
	B_\Pi=\left(\begin{array}{ccccccc}
		0&n-a-2&2&0\\
		1&1&0&0\\
		1&0&0&a-1\\
		0&0&2&0\\
	\end{array}\right).
	$$
	Let $f(x)$ denote the characteristic polynomial of $B_\Pi$. Then
    \begin{equation*}
	    \begin{aligned}
			f(\sqrt{2n-4})&=2n^2-10n-(2n-4)\sqrt{2n-4}-2a^2+2a(\sqrt{2n-4}+1)+12\\
			&\ge 2n^2-10n-(2n-4)\sqrt{2n-4}-2(n-4)^2\\
			&~~~+2(n-4)(\sqrt{2n-4}+1)+12~~(\mbox{as}~2\leq a\leq n-4)\\
			&= 4(2n-7-\sqrt{2n-4})\\
			&> 0.
		\end{aligned}
	\end{equation*}
We claim that $\lambda_1(B_\Pi) <  \sqrt{2n-4}$. Since $f(\sqrt{2a})=2a-2n+4<0$ and $\sqrt{2a} < \sqrt{2n-4}$, we have $\lambda_2(B_\Pi) <\sqrt{2a}$ or $\lambda_3(B_\Pi) >\sqrt{2a}$. If $\lambda_2(B_\Pi) <\sqrt{2a}$, then $\lambda_1(B_\Pi) <  \sqrt{2n-4}$, as desired. If $\lambda_3(B_\Pi) > \sqrt{2a}$, then $\lambda_2(B_\Pi)+\lambda_3(B_\Pi)> 2\sqrt{2a} > 1$. On the other hand,  since $\lambda_1(B_\Pi)+\lambda_2(B_\Pi)+\lambda_3(B_\Pi)+\lambda_4(B_\Pi)=\mathrm{trace}(B_\Pi)=1$ and $\lambda_1(B_\Pi)+\lambda_4(B_\Pi)\geq 0$, we obtain $\lambda_2(B_\Pi)+\lambda_3(B_\Pi)\leq 1$, which is  a contradiction. Therefore, by Lemma \ref{lem::4},
	$$
	\rho(K_{2,a}\bullet F_{\frac{n-a-2}{2}})=\lambda_1(B_\Pi)< \sqrt{2n-4}.
	$$
Similarly,  we can prove that $\rho(K_{2,a} * F_{\frac{n-a-2}{2}})< \sqrt{2n-4}$.
    \end{proof}
    

	\section{Proof of Theorem \ref{thm::1}}
	
In this section, we shall give the proof of  Theorem \ref{thm::1}.

	{\flushleft \it Proof of Theorem \ref{thm::1}.} Suppose that $G$ has the maximum spectral radius among all graphs without a chorded cycle. First of all, we claim that $G$ is connected. If not, then we can obtain a new graph $G'$ by adding a new edge between the component having $\rho(G)$ as an eigenvalue and any other component in $G$. Clearly, $G'$ does not contain a chorded cycle. By Lemma \ref{lem::2}, we have $\rho(G') > \rho(G)$, contrary to the assumption. 
	
Now suppose that  $\boldsymbol{x}=(x_{v_1},x_{v_2},\ldots,x_{v_n})^T$ is the Perron vector of $G$, and that $u^*$ is a vertex of $G$ such that $x_{u^*}=\max\{x_v: v\in V(G)\}$. Let $A=N_G(u^*)$, $B=V(G)\backslash (A\cup\{u^*\})$ and $\gamma(u^*)=|A|+2e(A)+e(A,B)$. Since $K_{2,n-2}$ does not contain a chorded cycle, we have
	\begin{equation}\label{eq1}
    \rho(G)\geq \rho(K_{2,n-2})=\sqrt{2n-4}.
    \end{equation} 
On the other hand, by using the eigenvalue-eigenvector equation, we obtain
    \begin{equation*}
        \begin{aligned}
            \rho^2(G)x_{u^*}&=\sum_{v\sim u^*}\sum_{w\sim v}x_w\\
    &=|A|x_{u^*}+\sum_{v\in A}d_A(v)x_v+\sum_{w\in B}d_A(w)x_w\\
    &\leq\left(|A|+2e(A)+e(A,B)\right) x_{u^*}\\
    &=\gamma(u^*)x_{u^*}.
        \end{aligned}
    \end{equation*}
Combining this with \eqref{eq1} yields that
   \begin{equation}\label{eq2}
   \gamma(u^*)\geq 2n-4.
   \end{equation} 
   We consider the following two situations.
   
   {\flushleft {\it Case 1.} $e(A)=0$.}

In this situation, we claim that $|B|\geq 1$, since otherwise $G\cong K_{1,n-1}$, and  $\rho(G)=\sqrt{n-1}<\sqrt{2n-4}$, a contradiction. Furthermore, 
since  $\rho(G)x_{u^*}=\sum_{v\in A}x_v\leq |A|x_{u^*}$, by (\ref{eq1}), we obtain $|A|\geq \rho(G)\geq \sqrt{2n-4}>2$.     
Also, since $G$ has no chorded cycles, we have $e(G)=|A|+e(A,B)+e(B)=\gamma(u^*)+e(B)\leq 2n-4$. Combining this with \eqref{eq2}, we can deduce that $\gamma(u^*)=2n-4$ and $e(B)=0$.  Hence, $G[A\cup B]$ is a bipartite graph with coloring classes $A$ and $B$. Moreover, since $G$ does not contain a chorded cycle,  we assert that $G[A\cup B]$ does not contain a copy of $P_5$ with both endpoints in $A$. Thus, by Lemma \ref{lem::7}, 
\begin{equation}\label{eq5}
e(G[A\cup B])\leq |A|+2|B|-2.
\end{equation} 
On ther other hand, since  $\gamma(u^*)=|A|+e(A,B)=2n-4$, we have $e(G[A\cup B])=e(A,B)=2n-4-|A|=2(1+|A|+|B|)-4-|A|=|A|+2|B|-2$, that is,  the equality in \eqref{eq5} holds.  Again by Lemma \ref{lem::7}, we conclude that 
   $G[A\cup B] \cong K_{|A|,1}$ because $|A|>2$ and $|B|\geq 1$. Therefore, $|A|=n-1$, $|B|=1$, and $G\cong K_{2,n-2}$.

    {\flushleft \textit{Case 2.}} $e(A)\neq 0$.   
    
    Since $G$ contains no chorded cycles, we see that $G[A]$ is $P_3$-free. Let $A_0$ and $A_1$ denote the set of vertices with degree $0$ and $1$ in $G[A]$, respectively. Then  $A=A_0 \cup A_1$. For any $v\in A$, let $N'(v)=N(v)\cap B$.  As $G$ does not contain a chorded cycle, we have the following claim.

    \begin{claim}\label{claim::3}
     If $v_1\in A_0$ and $v_2\in A_1$ or $v_1,v_2\in A_1$, then  $N'(v_1)\cap N'(v_2)= \emptyset$.
     \end{claim}
     

Let  $N'(A_0)=\cup_{v\in A_0}N'(v)$ and $N'(A_1)=\cup_{v\in A_1}N'(v)$. Recall that a graph of order $n$ without a chorded cycle has at most $2n-4$ edges. By counting the number of edges in  $G[\{u^*\}\cup A_0\cup N'(A_0)]$, we obtain $e(\{u^*\}\cup A_0\cup N'(A_0))= e(A_0,N'(A_0))+e(\{u^*\},A_0)+e(N'(A_0))= e(A_0,N'(A_0))+|A_0|+e(N'(A_0))\leq 2(1+|A_0|+|N'(A_0)|)-4$, and hence
 \begin{equation}\label{eq3}
   e(A_0,N'(A_0))+e(N'(A_0))\leq |A_0|+2|N'(A_0)|-2.
   \end{equation}
According to \eqref{eq3} and Claim \ref{claim::3}, we get 
$$
e(A,B)=e(A_0,N'(A_0))+e(A_1,N'(A_1))\leq |A_0|+2|N'(A_0)|+|N'(A_1)|-2.
$$
Then it follows from (\ref{eq2}) that 
\begin{equation}\label{eq4}
\begin{aligned}
    2n-4&\leq \gamma(u^*)\\
    &=|A|+2e(A)+e(A,B)\\
    &\leq|A|+|A_1|+|A_0|+2|N'(A_0)|+|N'(A_1)|-2\\
    &=2|A|+2|N'(A_0)|+|N'(A_1)|-2,
    \end{aligned}
    \end{equation}
    which gives that 
    $$|A|+|N'(A_0)|+ \frac{|N'(A_1)|}{2}+1\geq n.$$
    On the other hand,  $|A|+|N'(A_0)|+|N'(A_1)|+1\leq n$. Therefore,  $N'(A_1)=\emptyset$ and $B=N'(A_0)$.  Combining this with \eqref{eq4}, we see that 
    \begin{equation}
    \begin{aligned}
      2n-4&\leq |A|+2e(A)+e(A,B)\\
    &\leq|A|+|A_1|+|A_0|+2|N'(A_0)|-2\\
    &=2|A|+2|N'(A_0)|-2\\
    &=2|A|+2|B|-2\\
    &= 2n-4.
    \nonumber
    \end{aligned}
    \end{equation}
Hence,  
\begin{equation}\label{eq6}
e(A,B)=e(A_0,N'(A_0))= |A_0|+2|N'(A_0)|-2.
\end{equation} 
Combining this with  \eqref{eq3}, we  deduce that $e(B)=e(N'(A_0))=0$. Furthermore, we claim that $B\neq \emptyset$. By contradiction, suppose that $B=\emptyset$. 
    If $n$ is odd, then $G$ is a spanning subgraph of $F_{\frac{n-1}{2}}$, and by Lemmas \ref{lem::2} and \ref{lem::3},  
    $$\rho(G)\leq \rho(F_{\frac{n-1}{2}})=\frac{1+\sqrt{1+4(n-1)}}{2}<\sqrt{2n-4}$$
    as $n\geq 6$, contrary to \eqref{eq1}. If $n$ is even, then $G$ is a spanning subgraph of $F^{u^*}_{\frac{n-2}{2}}$, and by Lemmas \ref{lem::2} and \ref{lem::5}, 
    $$\rho(G)\leq \rho(F^{u^*}_{\frac{n-2}{2}})< \sqrt{2n-4},$$ 
    again contrary to \eqref{eq1}. Therefore, $B\neq \emptyset$ and $e(B)=0$.
    
In what follows, we shall discuss according to the value of $|A_0|$. If $|A_0|=0$, then $B=N'(A_0)=\emptyset$, a contradiction.  If $|A_0|=1$, we suppose $A_0=\{v\}$. Let $G_2=G-\{vw: w\in B\}+\{u^*w: w\in B\}$. Clearly, $G_2$ contains no chorded cycles. By Lemma \ref{lem::1}, we have $\rho(G_2)> \rho(G)$, a contradiction. If $|A_0|\geq 2$, since $B=N'(A_0)\neq \emptyset$ and $e(B)=0$, we see that $G[A_0\cup B]$ is a bipartite graph with coloring classes $A_0$ and $B$. Since $G$ does not contain a chorded cycle,  we assert that $G[A_0\cup B]$ does not contain a copy of $P_5$ with both endpoints in $A_0$. Thus,  by \eqref{eq6} and Lemma \ref{lem::7},  we conclude that  $G[A_0\cup B]\cong K_{2,|B|}$ (in the case that $|A_0|=2$) or $G[A_0\cup B]\cong K_{|A_0|,1}$ (in the case that $|B|=1$).  If $G[A_0\cup B]\cong K_{2,|B|}$, then  $G\cong K_{2,|B|+1}\bullet
     F_{\frac{n-|B|-3}{2}}$, and by Lemma \ref{lem::6},
    $$\rho(G)= \rho(K_{2,|B|+1}\bullet F_{\frac{n-|B|-3}{2}})<\sqrt{2n-4},$$
     contrary to \eqref{eq1}. If $G[A_0\cup B]\cong K_{|A_0|,1}$,  then  $G\cong K_{2,|A_0|}*  F_{\frac{n-2-|A_0|}{2}}$, and again by Lemma \ref{lem::6}, 
   $$\rho(G)= \rho(K_{2,|A_0|}* F_{\frac{n-2-|A_0|}{2}})<\sqrt{2n-4},$$ 
   which is impossible.

 Therefore, we conclude that $G\cong K_{2,n-2}$, and the result follows.\qed

\section*{Acknowledgement}
X. Huang is supported by the National Natural Science Foundation of China (Grant No. 11901540).

\section*{Data Availability}
 Data sharing not applicable to this article as no datasets were generated or analysed during the current study.

\section*{Declarations}
\textbf{Conflict of interest}  The authors declare no competing financial interests.

\end{document}